\newlist{hypothenum}{enumerate}{3}
\setlist[hypothenum,1]{label=(\roman*)}
\theoremstyle{plain}
\newtheorem{theorem}{Theorem}
\newtheorem{proposition}[theorem]{Proposition}
\theoremstyle{definition}
\newtheorem{definition}[theorem]{Definition}
\theoremstyle{remark}
\newcommand{\transpose}[1]{#1^{\mathrm t}}
\newcommand{\ZZ}{\mathbb{Z}}
\newcommand{\CC}{\mathbb{C}}
\DeclareMathOperator{\Id}{Id}
\newcommand{\fundef}[5]{
\entrymodifiers={+!!<0pt,\fontdimen22\textfont2>}
\xymatrix@R=3pt{\llap{$#1$\;\;} {#2} \ar@{->}[r] & {#3} \\ {#4} \ar@{|->}[r] & {#5}}
} 
\def\noqed{\renewcommand{\qedsymbol}{}}
\newcommand{\eg}{e.g.\ }
\newcommand{\PolyHR}[1]{P_{\mathrm{HR}}^{#1}}
\begin{document}

\title{New sequences of non-free rational points}
\author{Ilia Smilga \thanks{The author is supported by the European Research Council (ERC) under the European Union Horizon 2020 research and innovation programme (ERC starting grant DiGGeS, grant agreement No. 715982).}}
              
\maketitle

\begin{abstract}
We exhibit some new infinite families of rational values of~$\tau$, some of them squares of rationals, for which the group or even the semigroup generated by the matrices $\left( \begin{smallmatrix} 1 & 1 \\ 0 & 1 \end{smallmatrix} \right)$ and~$\left( \begin{smallmatrix} 1 & 0 \\ \tau & 1 \end{smallmatrix} \right)$ is not free.
\end{abstract}


For each $\alpha, \beta \in \CC$, we define the group
\[\Gamma(\alpha, \beta) := \left\langle \begin{pmatrix} 1 & \alpha \\ 0 & 1 \end{pmatrix}, \begin{pmatrix} 1 & 0 \\ \beta & 1 \end{pmatrix} \right\rangle,\]
and we define $S(\alpha, \beta)$ to be the semigroup generated by the same two matrices. The problem of determining values of the product $\tau := \alpha \beta$ for which the group $\Gamma(\alpha, \beta)$ (conjugate by $\left( \begin{smallmatrix} 1 & 0 \\ 0 & \alpha \end{smallmatrix} \right)$ to $\Gamma(1, \tau)$) is free has already attracted a considerable amount of attention \cite{San47, Bre55, CJR58, Ree61, LU69, KS94, Gil08}; the corresponding problem for the semigroup $S(\alpha, \beta)$ has also been studied \cite{BC78, Sla15}. In particular, a lot of people have focused on the special case where $\tau$ is the square of a rational \cite{LU69, Ign86, Bea93, TT96} (which is a natural condition as it corresponds to studying $\Gamma(\mu, \mu)$ with rational $\mu$), or more generally any rational number \cite{Bam00, KK}.

To avoid confusion, we should warn the reader that previous authors have variously used as the key parameter either the product $\alpha \beta$ (\eg \cite{KS94, KK}), its half, usually denoted by~$\lambda$ (focusing on $\Gamma(2, \lambda)$) (\eg \cite{CJR58, Ree61, LU69, Bam00, Gil08, Sla15}), or its square root, usually denoted by~$\mu$ (\eg \cite{San47, Bre55, BC78, Bea93, TT96}). We make the former choice, hence the following terminology:

\begin{definition}
We say that a number $\tau \in \CC$ is \emph{free} (resp. \emph{semigroup-free}) if the group $\Gamma(1, \tau)$ (resp. the semigroup $S(1, \tau)$) is free.
\end{definition}

An elementary observation (first made by Brenner \cite{Bre55}) is that for $\tau \geq 4$, the group $\Gamma(1, \tau)$ is always Schottky, hence free. So for groups, it suffices to study the interval $(-4, 4)$ (in fact even $(0, 4)$, given the symmetry $\Gamma(1, \tau) = \Gamma(1, -\tau)$). Similarly, for $\tau \geq 1$, the semigroup $S(1, \tau)$ is Schottky (see also \cite[Theorem 2.6]{BC78}), so it suffices to study the interval $(-4, 1)$.

Beyond this, very little is known. It is conjectured that all rational $\tau \in (-4, 4)$ are non-free, and that all rational $\tau \in (-4, 1)$ are non-semigroup-free. There are no known counterexamples, but only a limited number of values for which these two conjectures have been proved. Indeed, group relations are easiest to find for $\tau$ close to~$0$. A spectacular development in this direction is \cite{KK}, proving that all rational numbers $\tau \in (-4, 4)$ whose numerator is at most $23$, or between $25$ and~$27$, are non-free. On the other hand, as $\tau$ gets closer to~$4$, finding relations becomes extremely hard.

It is even harder to find accumulation points of non-free rational values. It seems that the only ones known so far (besides $0$, which is trivial as $\frac{1}{n}$ is never free) are those of the form $\frac{1}{n}$ \cite{Bea93} and those of the form $\frac{n}{n \pm 1}$ \cite{TT96}. As for accumulation points of non-semigroup-free rational values, only $0$ is known.

Our goal in this paper has been to find accumulation points of non-free and non-semigroup-free rational values (or squares of rationals) that lie as close as possible to the endpoints of the corresponding interval. The result that we obtain can be summarised as follows.

\begin{theorem}
\label{accu_points_list}
The set of~$\tau$ that are rational and non-semigroup-free (hence in particular non-free) has $-2 \pm \sqrt{2}$, $-\phi^{\pm 2}$, $-2$ and~$1$ as accumulation points (where $\phi = \frac{\sqrt{5}+1}{2}$ is the golden ratio). The set of~$\tau$ that are squares of rationals and non-semigroup-free (hence in particular non-free) has $1$ as an accumulation point.
\end{theorem}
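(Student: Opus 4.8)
I would prove the theorem by treating each claimed accumulation point $\tau_*$ separately and, for each, producing an explicit infinite sequence of rational values $\tau_n \to \tau_*$ that I certify to be non-semigroup-free. Write $A = \left(\begin{smallmatrix}1&1\\0&1\end{smallmatrix}\right)$ and $B = \left(\begin{smallmatrix}1&0\\\tau&1\end{smallmatrix}\right)$. The plan is to exhibit, for each $n$, two \emph{distinct} positive words $U_n, V_n$ in $A,B$ with $U_n = V_n$ in $\SL_2$; since $B^b = \left(\begin{smallmatrix}1&0\\b\tau&1\end{smallmatrix}\right)$, the entries of any positive word are continuant polynomials in $\tau$ with integer coefficients, so the matrix identity $U_n = V_n$ collapses to a single \emph{relation polynomial} $P_n(\tau) = 0$ with integer coefficients, and the non-semigroup-free numbers are exactly the real roots of the polynomials arising this way. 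Everything therefore reduces to finding word families whose relation polynomials have rational roots $\tau_n$ clustering at the prescribed $\tau_*$ (and, for the last assertion, roots that are squares of rationals).

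First I would explain why these particular targets occur. Since $\tr(AB) = \tau + 2$, the irrational targets are exactly the parameters at which $AB$ is elliptic of small finite order: $\tr(AB)=0$ at $\tau=-2$ (so $(AB)^4=I$); $\tr(AB)=\pm\sqrt2 = 2\cos\tfrac{\pi}{4}, 2\cos\tfrac{3\pi}{4}$ at $\tau=-2\pm\sqrt2$ (so $(AB)^8=I$); and $\tr(AB)\in\{\phi,-\phi^{-1}\}=\{2\cos\tfrac{\pi}{5}, 2\cos\tfrac{3\pi}{5}\}$ at $\tau=-\phi^{\pm2}$ (so $(AB)^{10}=I$); while at $\tau=1$ the matrix $AB = \left(\begin{smallmatrix}2&1\\1&1\end{smallmatrix}\right)$ is the Fibonacci hyperbolic element with eigenvalues $\phi^{\pm2}$, lying exactly on the Schottky threshold for the semigroup. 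Each target thus already carries a degenerate relation, and the idea is to perturb it: for rational $\tau$ near $\tau_*$ the element $AB$ rotates by an angle slightly off the resonant value, and I would absorb this small discrepancy into an auxiliary block of the word, turning the approximate identity into a genuine relation at a nearby rational parameter.

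To make this systematic — and, crucially, to force rationality and pin down the exact limits — I would build each relator family recursively, passing from $(U_n, V_n)$ to $(U_{n+1}, V_{n+1})$ by appending one fixed block (such as a further copy of $AB$) and readjusting a single exponent. Because the entries are continuants, I expect appending a fixed block to change the ``resolved'' parameter by an integer Möbius transformation, so that the solutions $\tau_n$ satisfy a recurrence $\tau_{n+1} = g(\tau_n)$ with $g \in \PGL_2(\ZZ)$; then $\tau_n$ remains rational once seeded rationally and converges to an attracting fixed point of $g$. The fixed points of an integer Möbius map solve $c\tau^2 + (d-a)\tau - b = 0$, and for suitable blocks these are precisely $\tau^2+4\tau+2$ (roots $-2\pm\sqrt2$) and $\tau^2+3\tau+1$ (roots $-\phi^{\pm2}$), while $-2$ and $1$ appear as the rational double (parabolic) fixed points of the relevant blocks. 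This viewpoint matches the torsion computation of the previous paragraph: the only special values the rationals can accumulate at are the finite-order values of $AB$, which are exactly quadratic irrationals or rationals, hence fixed points of integer Möbius maps.

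The main obstacle I anticipate is the creative core of the recursion: guessing relator families that actually realise the correct Möbius blocks, and then rigorously verifying both that the transported relation is nontrivial (the two words remain genuinely distinct, rather than reducing to a formal identity) and that appending a block really does act by an element of $\PGL_2(\ZZ)$ on the root — the continuant bookkeeping here is where I expect the real work to lie. A second, genuinely delicate point is the final claim near $1$, that the $\tau_n$ can be taken to be \emph{squares} of rationals: this is an extra Diophantine constraint that a generic $\PGL_2(\ZZ)$-orbit will not satisfy, so I expect to need a bespoke symmetric family there, working instead with $\Gamma(\mu,\mu)$ (which is conjugate to $\Gamma(1,\mu^2)$) and constructing the relations directly in the variable $\mu$ as $\mu \to \pm 1$. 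Finally, at the parabolic fixed points $-2$ and $1$ the map $g$ is tangent to the identity, so the convergence is no longer geometric but of order $1/n$; confirming that the orbit genuinely accumulates there rather than stalling will require a separate quantitative estimate.
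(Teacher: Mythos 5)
There is a genuine gap: your proposal is a research plan rather than a proof. The entire mathematical content of the theorem is the existence of the explicit relator families, and you explicitly defer exactly that step (``guessing relator families that actually realise the correct M\"obius blocks'' is, as you say yourself, ``where the real work lies''). The paper's proof consists precisely of writing these families down: for each target it exhibits a one-parameter family of symmetric exponent sequences (``half-relations''), e.g.\ $(1,-1,1,-1,\,2(-1)^kF_{k-1}F_k)$ at $\tau=\frac{F_{k+2}}{F_k}\to\phi^{\pm2}$, $((-1)^kP_{k-1}P_k,-1,1,-1,1,-1,1,-1,(-1)^kP_{k-1}P_k,x)$ at $\tau=\frac{H_{k+1}}{P_k}\to 2\pm\sqrt2$, $(k,-1,1,-1,k,x)$ at $\tau=\frac{2k\pm1}{k}\to 2$, and a Markov-like family $(1,\frac{6}{\sigma_{k+1}}(u^\sigma_k)^2,\frac{6}{\sigma_k}(u^\sigma_{k+1})^2,1)$ for the squares accumulating at $1$ --- and then verifies by a direct polynomial computation that the claimed rational $\tau$ is a root. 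Your trace heuristic correctly explains \emph{why} $-2\pm\sqrt2$, $-\phi^{\pm2}$, $-2$ are the natural targets (they are the parameters where $AB$ has small finite order), but it produces no relation and certifies no rational value.

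Two of your structural claims also need repair. First, a matrix identity $U_n=V_n$ in $\SL_2$ is three independent polynomial conditions on $\tau$, not one; the reduction to a single polynomial is exactly what the paper's half-relation device achieves, by restricting to words with a palindromic symmetry so that one entry condition (e.g.\ $c_{11}=c_{22}$) forces the full identity. Second, the actual mechanism is not ``append a block and move the root by an element of $\PGL_2(\ZZ)$'': in each family the word length is fixed and a single exponent $N$ varies, the relation polynomial is \emph{linear} in $N$ (e.g.\ $N\tau^2-(3N+2)\tau+(N+4)$), and the issue is that its roots are rational only when a quadratic discriminant in $N$ is a perfect square --- a Pell-type Diophantine condition whose solutions are the Fibonacci/Pell/$u^\sigma_k$ sequences. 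That is the step your M\"obius-recursion picture would have to reproduce and does not. (As a minor point, your remark that rational non-free values can only accumulate at torsion parameters of $AB$ is contradicted by the known accumulation points $\frac1n$ and $\frac{n}{n\pm1}$ cited in the introduction, and by $0$ itself.)
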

\begin{proof}
This follows from the more precise Theorem~\ref{infinite_families_description} below, which gives a concrete description of the infinite families that accumulate at these points. Note that:
\begin{itemize}
\item the values given in \ref{itm:golden} for positive (resp. negative) $k$ are in fact the convergents of the continued fraction of $\phi^2$ (resp. of $\phi^{-2}$);
\item the values given in \ref{itm:2_sqrt2} for positive (resp. negative) $k$ are in fact the convergents of the continued fraction of $2 + \sqrt{2}$ (resp. of $2 - \sqrt{2}$). \qedhere
\end{itemize}
\end{proof}

The technique used to prove points \ref{itm:golden} and \ref{itm:2_sqrt2} is similar to that used by Beardon \cite{Bea93} and Tan and Tan \cite{TT96}: a one-parameter family of relations leading to values of $\tau$ that are convergents of some continued fraction. The main difference is that these two papers focused on the case where $\tau$ is the square of a rational, whereas we consider all rational $\tau$. Moreover, with similar methods, one can also construct many other accumulation points; we have chosen to present only those that are closest to the endpoints of the interval.

For each member of each of the following five families, we will actually exhibit an explicit relation for the corresponding group or semigroup (see Proposition~\ref{relations_description}).

\begin{theorem}
\label{infinite_families_description}
All of the following values of~$\tau$ are non-free. The values given in~\ref{itm:Markov_like} are also non-semigroup-free. For $\tau$ as in \ref{itm:2kp1_k} with $k > 0$ or as in \ref{itm:golden} or~\ref{itm:2_sqrt2} with $(-1)^k k > 0$, its negative $-\tau$ is non-semigroup-free.
\begin{hypothenum}
\item \label{itm:2kp1_2k} $\tau = \left( \frac{2k \pm 1}{2k} \right)^2$, for all integer $k \neq 0$.
\item \label{itm:Markov_like} $\tau = \left( \frac{n-1}{n} \right)^2$, for all $n = \frac{6}{\sigma_0 \sigma_1} u^\sigma_k u^\sigma_{k+1}$, where $\sigma = (\sigma_0, \sigma_1)$ is any pair of distinct numbers among $\{1, 2, 3\}$, and $(u_k^\sigma)_{k \in \ZZ}$ is the integer sequence determined by the following recurrence relations:
\begin{equation}
\label{eq:u_sequence_def}
\begin{cases}
u_0^\sigma = u_1^\sigma = 1; \\
\forall k \in \ZZ,\quad u_{k-1}^\sigma - 2\sigma_{(k \operatorname{mod} 2)} u_{k}^\sigma + u_{k+1}^\sigma = 0
\end{cases}
\end{equation}
(see discussion below for explicit list of values).
\item \label{itm:2kp1_k} $\tau = \frac{2k \pm 1}{k}$, for all integer $k \neq 0$.
\item \label{itm:golden} $\tau = \frac{F_{k+2}}{F_k}$ for all integer $k \neq 0$, where $F$ is the Fibonacci sequence ($F_1 = F_2 = 1$). Explicitly, these are
\begin{equation}
2, 3, \frac{5}{2}, \frac{8}{3}, \frac{13}{5}, \frac{21}{8}, \ldots
\end{equation}
and their reciprocals.
\item \label{itm:2_sqrt2} $\tau = \frac{H_{k+1}}{P_k}$ for all integer $k \neq 0$, where $P_k$ are the Pell numbers and $H_k$ are the half-companion Pell numbers, given by $\left( \begin{smallmatrix} H_k \\ P_k \end{smallmatrix} \right) := \left( \begin{smallmatrix} 1 & 2 \\ 1 & 1 \end{smallmatrix} \right)^k \left( \begin{smallmatrix} 1 \\ 0 \end{smallmatrix} \right)$. Explicitly, these are
\begin{equation}
3, \frac{7}{2}, \frac{17}{5}, \frac{41}{12}, \frac{99}{29}, \frac{239}{70}, \ldots
\end{equation}
and twice their reciprocals.
\end{hypothenum}
\end{theorem}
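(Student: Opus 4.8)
The plan is to reduce the entire statement to the construction and verification of explicit relations, one for each member of each family, which is the content of the forthcoming Proposition~\ref{relations_description}. The starting observation is that non-freeness is certified by a single witness: to show that $\tau$ is non-free it suffices to exhibit one nonempty reduced word in $A=\bigl(\begin{smallmatrix}1&1\\0&1\end{smallmatrix}\bigr)$, $B=\bigl(\begin{smallmatrix}1&0\\\tau&1\end{smallmatrix}\bigr)$ and their inverses that evaluates to the identity matrix, whereas to show that $\tau$ is non-semigroup-free it suffices to exhibit two \emph{distinct positive} words that evaluate to the same matrix. Since $A^{m}=\bigl(\begin{smallmatrix}1&m\\0&1\end{smallmatrix}\bigr)$ and $B^{n}=\bigl(\begin{smallmatrix}1&0\\n\tau&1\end{smallmatrix}\bigr)$, any fixed word-pattern $A^{m_1}B^{n_1}A^{m_2}\cdots$ has matrix entries that are polynomials in $\tau$ with integer coefficients; imposing that the word be the identity, or that two words coincide, turns into polynomial conditions on $\tau$. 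The whole game is therefore to design word-patterns whose controlling polynomial vanishes exactly at the prescribed rational values $\tau_k$. Verifying each resulting relation is then a finite matrix computation, carried out uniformly in the parameter $k$ by induction; what genuinely requires thought is discovering the patterns and proving non-triviality.

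First I would treat the two continued-fraction families \ref{itm:golden} and \ref{itm:2_sqrt2}, following the strategy of Beardon and of Tan and Tan. Here I would define a one-parameter family of words $W_k$ by a recursion in which passing from $W_{k}$ to $W_{k+1}$ amounts to multiplying on one side by a fixed block $A^{a}B^{b}$ whose exponents are the (eventually periodic) partial quotients of the relevant quadratic surd --- the constant $1$'s of $\phi^{2}=[2;\overline{1}]$ for \ref{itm:golden}, and the $2$'s of $2+\sqrt{2}=[3;\overline{2}]$ for \ref{itm:2_sqrt2}. An induction on $k$ should then show that the distinguished off-diagonal entry of $W_{k}$ equals, up to an explicit factor, the continuant of those partial quotients, so that its vanishing forces $\tau$ to be exactly the $k$-th convergent $F_{k+2}/F_{k}$, respectively $H_{k+1}/P_{k}$. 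The Markov-like family \ref{itm:Markov_like} fits the same template, with the sequence $(u_k^{\sigma})$ of \eqref{eq:u_sequence_def} --- a continuant-type recurrence with alternating coefficients $2\sigma_0,2\sigma_1$ --- playing the role of the continuant; the value $n=\tfrac{6}{\sigma_0\sigma_1}u_k^{\sigma}u_{k+1}^{\sigma}$ is then precisely the bookkeeping that makes the associated word close up, and the three choices of the pair $\sigma$ account for the three essentially different interleavings of the two parabolic generators.

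The two remaining families \ref{itm:2kp1_2k} and \ref{itm:2kp1_k} are shorter, and I expect a single word of bounded combinatorial length --- with only the exponents growing linearly in $k$ --- to furnish the relation, so that it can be both stated and checked by multiplying a handful of matrices $A^{\pm m},B^{\pm n}$ and confirming that the product degenerates at $\tau_k$. It then remains to extract the refined sign and semigroup assertions. The non-semigroup-free claims for \ref{itm:Markov_like} are genuine \emph{positive} relations with $\tau\in(0,1)$, and should come straight out of the construction. The claims about $-\tau$ for \ref{itm:2kp1_k}, \ref{itm:golden} and \ref{itm:2_sqrt2} instead exploit the reflection symmetry realised by conjugation by $\Diag(1,-1)$, which fixes $A$ up to inversion and sends $B$ at parameter $\tau$ to $B$ at parameter $-\tau$; since the relevant $\tau$ here are $\geq 1$ (where the semigroup is already free by the Schottky bound), it is only after this sign flip, landing in $(-4,0)$, that a positive relation can exist, and whether the transformed word can be rewritten using positive exponents only is exactly what the parity conditions $k>0$ and $(-1)^k k>0$ record.

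The main obstacle is twofold. The creative difficulty is \emph{finding} the correct recursive word-families, so that the controlling polynomial is precisely the continuant whose zeros are the desired convergents; once the right Ansatz is in hand, the verification is a bookkeeping induction. The more delicate technical difficulty, especially for the semigroup statements, is \emph{non-triviality and positivity}: one must ensure that each witnessing reduced word is genuinely nonempty (so that freeness really fails and we have not merely rediscovered a trivial identity), and, in the semigroup case, that the two positive words being equated are distinct and that every exponent occurring stays positive throughout the induction. Controlling these signs, rather than the matrix identity itself, is where I expect the real work to lie.
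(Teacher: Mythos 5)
Your proposal is a roadmap rather than a proof: every assertion of the theorem ultimately rests on producing the explicit witnessing relations, and these are exactly what the proposal never writes down. The phrases ``I would define'', ``should then show'', ``I expect a single word of bounded combinatorial length'' defer the entire mathematical content --- the discovery of the word patterns and the verification that their controlling polynomials vanish at $\left(\tfrac{2k\pm1}{2k}\right)^2$, $\tfrac{F_{k+2}}{F_k}$, etc.\ --- to work that is not done. The paper's actual route is to introduce \emph{half-relations}: palindromic identities $g^{a_1}h_\tau^{a_2}\cdots f^{a_l}=h_\tau^{a_l}g^{a_{l-1}}\cdots f^{a_1}$, certified by a single polynomial condition $\PolyHR{l}(a_1,\ldots,a_l;\tau)=0$ imposed on a word of only half the length (Proposition~\ref{half_rels_vs_rels}); all five families are then handled by exhibiting concrete tuples $(a_1,\ldots,a_l)$ with $l\le 10$ in Proposition~\ref{relations_description}. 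This symmetry device, which both halves the search space and makes the sign and positivity bookkeeping for the semigroup statements immediate, is absent from your proposal.

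Where you do commit to a construction, it diverges from the paper in a way that would need real work to salvage. For families \ref{itm:golden} and \ref{itm:2_sqrt2} you propose words of combinatorial length growing with $k$ (appending one block of partial quotients per step, \`a la Beardon and Tan--Tan); the paper instead uses a half-relation of \emph{fixed} length ($5$ resp.\ $10$) in which a single exponent grows, namely $2(-1)^kF_{k-1}F_k$ resp.\ $(-1)^kP_{k-1}P_k$, and the verification is a direct substitution of the closed formulas for $F_k$, $P_k$, $H_k$ into a quadratic factor in~$\tau$. Your growing-word ansatz is plausible but unverified: showing that the relevant entry is ``up to an explicit factor, the continuant'' and that its roots are precisely $F_{k+2}/F_k$ (the convergents of $\phi^2$, not of $\phi$) is the whole difficulty, not a routine induction. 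Finally, you do not anticipate the degenerate cases: for $k=1$ in \ref{itm:golden} and \ref{itm:2_sqrt2} and for $k=-1$ in \ref{itm:2kp1_2k} a coefficient of the natural relation vanishes, the word collapses, and a separate argument (an ad hoc relation for $\tau=2$, $3$ and a different half-relation for $\tau=\tfrac{9}{4}$) is required --- precisely the kind of non-triviality failure your last paragraph warns about but does not resolve.
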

\begin{proof}[Discussion of the values given in \ref{itm:Markov_like}.]
One readily checks the following symmetry of the sequences $u_k^\sigma$:
\begin{equation}
\forall \sigma_0 \neq \sigma_1 \in \{1, 2, 3\},\; \forall k \in \ZZ,\quad u^{(\sigma_0, \sigma_1)}_k = u^{(\sigma_1, \sigma_0)}_{1-k},
\end{equation}
so it suffices to consider only three of the six sequences. Moreover the sequences $u^{(1,2)}$ and $u^{(1,3)}$ are symmetric:
\begin{equation}
\forall \sigma_1 \in \{2, 3\},\; \forall k \in \ZZ,\quad u^{(1, \sigma_1)}_k = u^{(1, \sigma_1)}_{-k}
\end{equation}
(indeed observe that in this case $u^\sigma_{-1} = 2 u^\sigma_0 - u^\sigma_1 = 1 = u^\sigma_1$, then use induction), so it suffices to consider their nonnegative terms. Explicitly, we have:
\begin{gather}
\left(u^{(1,2)}_{\pm k}\right)_{k \geq 0} = 1, 1, 3, 5, 17, 29, 99, 169, \ldots; \\
\left(u^{(1,3)}_{\pm k}\right)_{k \geq 0} = 1, 1, 5, 9, 49, 89, 485, 881, \ldots; \\
\left(u^{(2,3)}_k\right)_{k \in \ZZ} = \ldots, 1427, 373, 65, 17, 3, 1, 1, 5, 19, 109, 417, 2393, \ldots.
\end{gather}
These give rise to the following values of~$n$:
\begin{gather}
\left(3 u^{(1,2)}_k u^{(1,2)}_{k+1}\right)_{k \geq 0} = 3, 9, 45, 255, 1479, 8613, 50193, \ldots; \\
\left(2 u^{(1,3)}_k u^{(1,3)}_{k+1}\right)_{k \geq 0} = 2, 10, 90, 882, 8722, 86330, \ldots; \\
\left(u^{(2,3)}_k u^{(2,3)}_{k+1}\right)_{k \in \ZZ} = \ldots, 24245, 1105, 51, 3, 1, 5, 85, 2071, 45453, \ldots. 
\end{gather} \noqed
\end{proof}

In fact, instead of relations, we will exhibit so-called ``half-relations'', that we will define right now. We will then see (Proposition~\ref{half_rels_vs_rels}) that each half-relation easily allows one to write down an actual relation.

\begin{definition}
Let $\tau \in \CC$, and let us fix the notations $g := \left( \begin{smallmatrix} 1 & 1 \\ 0 & 1 \end{smallmatrix} \right)$ and $h_\tau := \left( \begin{smallmatrix} 1 & 0 \\ \tau & 1 \end{smallmatrix} \right)$.
We say that a sequence $(a_1, \ldots, a_l) \in \ZZ^l$ is a \emph{half-relation} for~$\tau$ if:
\begin{hypothenum}
\item for odd~$l$, the matrix $M = g^{a_1} h_\tau^{a_2} \cdots h_\tau^{a_{l-1}} g^{a_l}$ satisfies
\begin{equation}
\label{eq:odd_half_rel_def}
\tau c_{12}(M) - c_{21}(M) = 0;
\end{equation}
\item for even~$l$, the matrix $M = g^{a_1} h_\tau^{a_2} \cdots g^{a_{l-1}} h_\tau^{a_l}$ satisfies
\begin{equation}
\label{eq:even_half_rel_def}
c_{11}(M) - c_{22}(M) = 0,
\end{equation}
\end{hypothenum}
where $c_{ij}$ denotes the $(i, j)$-th coefficient, so that
\begin{equation}
\label{eq:cij_def}
\forall M,\quad M =: \begin{pmatrix} c_{11}(M) & c_{12}(M) \\ c_{21}(M) & c_{22}(M) \end{pmatrix}.
\end{equation}
\end{definition}

The point of half-relations is that they allow us to construct a special kind of relations for the group or semigroup corresponding to $\pm \tau$:

\begin{proposition}
\label{half_rels_vs_rels}
A sequence $(a_1, \ldots, a_l) \in \ZZ^l$ is a half-relation for~$\tau$ if and only if the identity
\begin{equation}
\label{eq:symmetric_relation}
g^{a_1} h_\tau^{a_2} \cdots f^{a_l} = h_\tau^{a_l} g^{a_{l-1}} \cdots f^{a_1},\qquad \text{ where } f :=
\begin{cases}
g & \text{ if $l$ is odd} \\
h_\tau & \text{ if $l$ is even,}
\end{cases}
\end{equation}
or equivalently
\begin{equation}
\label{eq:one_sided_relation}
g^{a_1} h_\tau^{a_2} \cdots g^{-a_2} h_\tau^{-a_1} = \Id,
\end{equation}
holds. Such an identity provides a nontrivial relation:
\begin{itemize}
\item for the group $\Gamma(1, \tau)$ if $\forall i,\; a_i \neq 0$;
\item for the semigroup $S(1, \tau)$ if $\forall i,\; a_i > 0$;
\item for the semigroup $S(1, -\tau)$ if $\forall i,\; (-1)^i a_i > 0$.
\end{itemize}
\end{proposition}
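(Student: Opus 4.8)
The engine of the proof will be a pair of anti-automorphisms of $\GL_2$ that fix the two generators up to a permutation. Concretely, the plan is to set $\chi(M) := D^{-1}\transpose{M}D$ with $D := \Diag(\tau, 1)$ (invertible in the relevant case $\tau \neq 0$), and $\psi(M) := J\transpose{M}J$ with $J := \left(\begin{smallmatrix} 0 & 1 \\ 1 & 0\end{smallmatrix}\right)$; each is an anti-automorphism, being the composition of the transpose with a conjugation. A direct $2\times 2$ computation gives $\chi(g) = h_\tau$, $\chi(h_\tau) = g$ and $\psi(g) = g$, $\psi(h_\tau) = h_\tau$, together with $\chi(M) = \left(\begin{smallmatrix} c_{11} & c_{21}/\tau \\ \tau c_{12} & c_{22}\end{smallmatrix}\right)$ and $\psi(M) = \left(\begin{smallmatrix} c_{22} & c_{12} \\ c_{21} & c_{11}\end{smallmatrix}\right)$ for a general $M$. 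Reading off the fixed-point equations, $M = \chi(M)$ is exactly \eqref{eq:odd_half_rel_def} and $M = \psi(M)$ is exactly \eqref{eq:even_half_rel_def}. Thus \textbf{being a half-relation is the same as $M$ being a fixed point} of $\chi$ (for odd $l$) or of $\psi$ (for even $l$).

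Since $\chi$ and $\psi$ reverse products, applying them to $M = g^{a_1}h_\tau^{a_2}\cdots f^{a_l}$ reverses the word and relabels each letter by its image: $\chi$ swaps $g\leftrightarrow h_\tau$, while $\psi$ leaves them in place, and in both cases the outcome is precisely the right-hand side of \eqref{eq:symmetric_relation}. Hence the fixed-point equation $M = \chi(M)$ (resp. $M = \psi(M)$) \emph{is} \eqref{eq:symmetric_relation}, which settles the first equivalence in both directions simultaneously. Passing to the one-sided form \eqref{eq:one_sided_relation} is then purely formal: one multiplies \eqref{eq:symmetric_relation} on the right by the inverse of its right-hand side and simplifies, and conversely one splits \eqref{eq:one_sided_relation} at its midpoint. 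I would record this index bookkeeping but expect no difficulty in it.

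It then remains to extract, from one of the two equivalent identities, a genuine relation in each setting. Write $w$ for the word on the left of \eqref{eq:one_sided_relation}, namely $M$ followed by the inverse of its $\chi$- (resp. $\psi$-)image. For the \emph{group}: when every $a_i\neq 0$ the two halves of $w$ join without cancellation (the last letter of $M$ and the first letter of the inverted image are of different types), so $w$ is a nonempty reduced word in the free group on $g,h_\tau$ that nonetheless represents $\Id$; hence the tautological surjection onto $\Gamma(1,\tau)$ has nontrivial kernel and $\Gamma(1,\tau)$ is not free. For the \emph{semigroup} $S(1,\tau)$: when every $a_i>0$ the two sides of \eqref{eq:symmetric_relation} are distinct positive words (one begins with a power of $g$, the other with a power of $h_\tau$) representing the same matrix, so the surjection from the free semigroup is not injective.

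The delicate case, which I expect to be the main obstacle, is $S(1,-\tau)$. Here I would use that conjugation by $\Diag(1,-1)$ is an isomorphism sending $g\mapsto g^{-1}$ and $h_\tau\mapsto h_{-\tau}$, so that positivity in the generators $\{g, h_{-\tau}\}$ of $S(1,-\tau)$ coincides with positivity in $\{g^{-1}, h_\tau\}$. Under the hypothesis $(-1)^i a_i>0$ every power of $g$ appearing in $M$ is negative and every power of $h_\tau$ is positive, so $M$ itself is such a positive word. The subtlety is that the \emph{swap} carried out by $\chi$ in the odd case sends the two sides of \eqref{eq:symmetric_relation} into the two \emph{different} one-generator-inverted semigroups, so there one cannot simply compare the two sides; instead I would pass to \eqref{eq:one_sided_relation} and check, letter by letter, that the sign pattern makes $w$ a nonempty positive word (in $\{g^{-1},h_\tau\}$) equal to the identity, again forcing a relation. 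When $l$ is even the non-swapping map $\psi$ keeps both sides of \eqref{eq:symmetric_relation} positive in the \emph{same} semigroup, and they are distinct, so that route works directly. Carefully tracking these signs across the two parities is the only real labour; everything else reduces to the fixed-point reformulation above.
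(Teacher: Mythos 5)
Your proposal is correct and follows essentially the same route as the paper: the same two anti-automorphisms (your $\psi(M)=J\transpose{M}J$ agrees on $\SL_2$ with the paper's $\left(\begin{smallmatrix} 1 & 0 \\ 0 & -1 \end{smallmatrix}\right)M^{-1}\left(\begin{smallmatrix} 1 & 0 \\ 0 & -1 \end{smallmatrix}\right)$), the same fixed-point reformulation of the half-relation condition, and the same sign analysis for the three bullet points, including the correct observation that for $S(1,-\tau)$ one must use the one-sided form \eqref{eq:one_sided_relation} when $l$ is odd and may compare the two sides of \eqref{eq:symmetric_relation} directly when $l$ is even.
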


\begin{proof}
Let $M = g^{a_1} h_\tau^{a_2} \cdots$ denote the left-hand side of~\eqref{eq:symmetric_relation}. Then $(a_1, \ldots, a_l)$ is a half-relation for~$\tau$ if and only if $M$ is a fixed point of:
\begin{itemize}
\item the involution $\left( \begin{smallmatrix} a & b \\ c & d \end{smallmatrix} \right) \mapsto \left( \begin{smallmatrix} a & \smash{c/\tau} \\ \tau b & d \end{smallmatrix} \right)$, or equivalently $M \mapsto \left( \begin{smallmatrix} 1 & 0 \\ 0 & \tau \end{smallmatrix} \right) \transpose{M} \left( \begin{smallmatrix} 1 & 0 \\ 0 & \tau \end{smallmatrix} \right)^{-1}$, for odd~$l$;
\item the involution $\left( \begin{smallmatrix} a & b \\ c & d \end{smallmatrix} \right) \mapsto \left( \begin{smallmatrix} d & b \\ c & a \end{smallmatrix} \right)$, or equivalently $M \mapsto \left( \begin{smallmatrix} 1 & 0 \\ 0 & -1 \end{smallmatrix} \right) M^{-1} \left( \begin{smallmatrix} 1 & 0 \\ 0 & -1 \end{smallmatrix} \right)^{-1}$, for even~$l$.
\end{itemize}
Both of them are antimorphisms; moreover the former switches $g$ and~$h_\tau$, whereas the latter fixes both of them. In both cases, the involution maps both sides of~\eqref{eq:symmetric_relation} to each other.

Now assume that this is true. Then \eqref{eq:symmetric_relation}, usually rewritten as \eqref{eq:one_sided_relation} for groups, is a relation in $\Gamma(1, \tau)$, which is nontrivial if the coefficients are all nonzero. If they are all positive, then \eqref{eq:symmetric_relation} is in fact a relation in $S(1, \tau)$. If they have alternating signs (note that whenever $(a_1, \ldots, a_l)$ is a half-relation, so is $(-a_1, \ldots, -a_l)$), then \eqref{eq:symmetric_relation} (for even~$l$) or \eqref{eq:one_sided_relation} (for odd~$l$) is a relation in $S(1, -\tau)$.
\end{proof}

Let us also observe that being a half-relation is a polynomial condition: more precisely, the left-hand side of \eqref{eq:odd_half_rel_def} (for odd~$l$) or \eqref{eq:even_half_rel_def} (for even~$l$) is a polynomial in $a_1, \ldots, a_l$ and~$\tau$. One also easily checks by induction that this polynomial is always divisible by~$\tau$, so we can factor $\tau$ out:
\begin{definition}
For each $l \geq 0$, we define the polynomial 
\begin{equation}
\PolyHR{l}(a_1, \ldots, a_l; \tau) :=
\begin{cases}
(c_{12} - \frac{1}{\tau}c_{21})(g^{a_1} h_\tau^{a_2} \cdots g^{a_l}) &\text{ if $l$ is odd;} \\
\frac{1}{\tau}(c_{11} - c_{22})(g^{a_1} h_\tau^{a_2} \cdots h_\tau^{a_l}) &\text{ if $l$ is even}
\end{cases}
\end{equation}
(where $c_{ij}$ is the $i,j$-th coefficient, as in \eqref{eq:cij_def}).
\end{definition}

Thus, by construction, whenever $\tau \in \CC$ is a root of some polynomial $\PolyHR{l}(a_1, \ldots, a_l)$ for some tuple $(a_1, \ldots, a_l)$ of nonzero (resp. positive) integers, it is non-free (resp. non-semigroup-free).
Explicitly, for small~$l$, these polynomials are as follows:
\begin{align*}
\PolyHR{1}(a_1; \tau) &= a_1 \\
\PolyHR{2}(a_1, a_2; \tau) &= a_1 a_2 \\
\PolyHR{3}(a_1, a_2, a_3; \tau) &= a_1 a_2 a_3 \tau + a_1 - a_2 + a_3 \\
\PolyHR{4}(a_1, \ldots, a_4; \tau) &= a_1 a_2 a_3 a_4 \tau + a_1 a_2 - a_2 a_3 + a_3 a_4 + a_1 a_4 \\
\begin{split}
\PolyHR{5}(a_1, \ldots, a_5; \tau) &= a_1 a_2 a_3 a_4 a_5 \tau^2 + (a_1 a_2 a_3 - a_2 a_3 a_4 + a_1 a_2 a_5 + a_1 a_4 a_5 + a_3 a_4 a_5)\tau \\
&\qquad + a_1 - a_2 + a_3 - a_4 + a_5.
\end{split}
\end{align*}
The observation that non-free values can be obtained as roots of certain polynomials has already been made previously. Even more specifically, when $l = 2k+1$ is odd, the polynomial $\PolyHR{2k+1}$ coincides (up to the substitution $\tau = - 2\lambda$ and a sign switch for the odd $a_i$'s) with Bamberg's polynomial $B_k$ \cite{Bam00}.

We are now ready to exhibit the half-relations that prove all the statements of Theorem~\ref{infinite_families_description}. Note that in point~\ref{itm:2kp1_k}, the subcase \ref{itm:2kp1_k_general} suffices by itself to prove the Theorem in the general case; but we have also presented two additional special cases, \ref{itm:2kp1_k_even} and \ref{itm:2kp1_k_quad}, for which we can shorten the half-relation by one coefficient.

\begin{proposition}
\label{relations_description}
Fix some $k \in \ZZ$, assumed to be nonzero except in point~\ref{itm:Markov_like}, and some pair $\sigma = (\sigma_0, \sigma_1)$ of distinct coefficients among $\{1, 2, 3\}$.
\begin{hypothenum}
\item[\ref{itm:2kp1_2k}] The value $\tau = \left(\frac{2k-1}{2k}\right)^2$ has $(1, -1, -k, k(4k+4))$ as a half-relation.
\item[\ref{itm:Markov_like}] The value $\tau = \left(\frac{n-1}{n}\right)^2$, where $n = \frac{6}{\sigma_0 \sigma_1} u^\sigma_k u^\sigma_{k+1}$, has $(1, \frac{6}{\sigma_{k+1}} (u^\sigma_k)^2, \frac{6}{\sigma_k} (u^\sigma_{k+1})^2, 1)$ as a half-relation (where $\sigma_k$ is a notation shortcut for $\sigma_{(k \operatorname{mod} 2)}$).
\item[\ref{itm:2kp1_k}] The value $\tau = \frac{2k + 1}{k}$ has following half-relations:
\begin{enumerate}[label=\alph*)]
\item \label{itm:2kp1_k_general} All sequences $(k, -1, 1, -1, k, x)$, where $x$ is any integer.
\item \label{itm:2kp1_k_even} If $k = 2t$ for some integer $t$, the sequence $(1, -1, 1, -t, -4t^2+2t-2)$.
\item \label{itm:2kp1_k_quad} If $k = \frac{t(t+1)}{2}-1$ for some integer $t$, the sequence  $(1, -1, 1, -t+1, -t-2)$.
\end{enumerate}
\item[\ref{itm:golden}] The value $\tau = \frac{F_{k+2}}{F_k}$ has $(1, -1, 1, -1, 2(-1)^kF_{k-1} F_{k})$ as a half-relation (where $F$ is the Fibonacci sequence, $F_1 = F_2 = 1$).
\item[\ref{itm:2_sqrt2}] The value $\tau = \frac{H_{k+1}}{P_k}$ has $((-1)^kP_{k-1}P_k, -1, 1, -1, 1, -1, 1, -1, (-1)^kP_{k-1}P_k, x)$ as a half-relation for any integer~$x$ (where $P_k$ are the Pell numbers and $H_k$ are the half-companion Pell numbers, given by $\left( \begin{smallmatrix} H_k \\ P_k \end{smallmatrix} \right) := \left( \begin{smallmatrix} 1 & 2 \\ 1 & 1 \end{smallmatrix} \right)^k \left( \begin{smallmatrix} 1 \\ 0 \end{smallmatrix} \right)$).
\end{hypothenum}
\end{proposition}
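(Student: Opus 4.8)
The plan is to verify, case by case, that the listed tuple $(a_1,\ldots,a_l)$ satisfies the defining equation of a half-relation for the stated~$\tau$, i.e.\ that $\PolyHR{l}(a_1,\ldots,a_l;\tau)=0$. For the short tuples this is a finite computation: I would substitute the entries into the explicit formulas for $\PolyHR{4}$ (families~\ref{itm:2kp1_2k} and~\ref{itm:Markov_like}) and $\PolyHR{5}$ (family~\ref{itm:golden} and the special cases~\ref{itm:2kp1_k_even},~\ref{itm:2kp1_k_quad}), solve $\PolyHR{l}=0$ for~$\tau$ as a rational function of the remaining parameter, and check that the claimed value is a root. For instance, in~\ref{itm:2kp1_2k} this gives $\tau=\frac{4k^3-3k+1}{4k^3+4k^2}$, which factors as $\frac{(2k-1)^2(k+1)}{4k^2(k+1)}=\left(\frac{2k-1}{2k}\right)^2$.

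For the two families built from a recurrence, substitution leaves a quadratic identity in consecutive terms. In~\ref{itm:golden}, writing $\tau=F_{k+2}/F_k$ and using $F_{k+2}=F_{k+1}+F_k$ reduces $\PolyHR{5}=0$ to the Cassini-type identity $F_{k+1}^2-F_{k+1}F_k-F_k^2=(-1)^k$, which forces the last entry to be $2(-1)^kF_{k-1}F_k$. In~\ref{itm:Markov_like} the analogous reduction (using $a_2a_3=\sigma_0\sigma_1 n^2$ and $n=\frac{6}{\sigma_0\sigma_1}u^\sigma_k u^\sigma_{k+1}$) leads to the quadratic relation $\frac{6}{\sigma_{(k+1)\bmod 2}}(u^\sigma_k)^2+\frac{6}{\sigma_{k\bmod 2}}(u^\sigma_{k+1})^2+1+\sigma_0\sigma_1=12\,u^\sigma_k u^\sigma_{k+1}$. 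The plan here is to exhibit the conserved quadratic form $\sigma_{k\bmod 2}(u^\sigma_k)^2+\sigma_{(k+1)\bmod 2}(u^\sigma_{k+1})^2-2\sigma_0\sigma_1 u^\sigma_k u^\sigma_{k+1}$, show it is independent of~$k$ by a one-line induction from~\eqref{eq:u_sequence_def}, evaluate it at $k=0$ to get $\sigma_0+\sigma_1-2\sigma_0\sigma_1$, and finally match it to the required value via the numerical identity $6(\sigma_0+\sigma_1)=\sigma_0\sigma_1(11-\sigma_0\sigma_1)$, which holds precisely on the pairs in $\{1,2,3\}$ (this arithmetic coincidence is what makes the prefactor $\frac{6}{\sigma_0\sigma_1}$ work).

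The two tuples ending in a free parameter~$x$ (families~\ref{itm:2kp1_k_general} and~\ref{itm:2_sqrt2}) need a separate device. Writing $M=Nh_\tau^x$ with $N$ the product of the first $l-1$ factors, one has $c_{22}(M)=c_{22}(N)$ and $c_{11}(M)=c_{11}(N)+x\tau\,c_{12}(N)$, so the even half-relation condition holds for every~$x$ if and only if $c_{12}(N)=0$ and $c_{11}(N)=c_{22}(N)$. The first $l-1$ entries form an odd-length palindrome, and for any such palindrome the identity $JNJ=\transpose{N}$, where $J=\left(\begin{smallmatrix}0&1\\1&0\end{smallmatrix}\right)$ (valid because $g$ and~$h_\tau$ have equal diagonal entries, so $JXJ=\transpose{X}$ on each factor and the palindromic order makes the two products agree), gives $c_{11}(N)=c_{22}(N)$ for free. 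Only the scalar equation $c_{12}(N)=0$ then remains: for~\ref{itm:2kp1_k_general} a direct $2\times2$ multiplication yields $c_{12}(N)=k^2\tau(\tau-2)+2k(1-\tau)+1$, whose two roots are $\frac1k$ and the desired $\frac{2k+1}{k}$.

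The hardest step is family~\ref{itm:2_sqrt2}. Here $N=g^{A}\,h_\tau^{-1}(gh_\tau^{-1})^{3}\,g^{A}$ with $A=(-1)^kP_{k-1}P_k$, so computing $c_{12}(N)$ requires the third power of $gh_\tau^{-1}=\left(\begin{smallmatrix}1-\tau&1\\-\tau&1\end{smallmatrix}\right)$, which has determinant~$1$ and trace $2-\tau$; I would compute it through the Chebyshev recursion $X^3=(t^2-1)X-tI$ with $t=2-\tau$, then conjugate by $g^{A}$. Substituting $\tau=H_{k+1}/P_k$ and collapsing the resulting expression hinges on the Pell recurrences $H_{k+1}=H_k+2P_k$, $P_{k+1}=H_k+P_k$ and the norm relation $H_k^2-2P_k^2=(-1)^k$ (note $2-\tau=-H_k/P_k$). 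Organising these identities into the vanishing of $c_{12}(N)$ is where essentially all the computation lives; I expect the same scheme — peel off $h_\tau^x$, invoke palindromic symmetry, reduce to one Pell identity — to make it tractable, but it is the only case that is not a one- or two-line verification.
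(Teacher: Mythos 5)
Your proposal is correct, and while it shares the paper's overall framework (verify that each listed tuple is a root of the appropriate polynomial $\PolyHR{l}$), several of your verification devices genuinely differ from the paper's. For family~\ref{itm:golden} the paper substitutes the Binet closed formula for $F_k$ and expands, whereas you reduce $\PolyHR{5}=0$ to Cassini's identity via $F_{k+2}=F_{k+1}+F_k$; both work (I checked that $F_k^2\,\PolyHR{5} = (-1)^k N - 2F_{k-1}F_k$, so your reduction is exact), and yours stays in integer arithmetic and handles negative indices uniformly. For family~\ref{itm:Markov_like} your conserved quadratic form is, up to the factor $\frac{6}{\sigma_0\sigma_1}$, the non-constant part of the paper's polynomial $P^\sigma(x,y)$, and your induction is the same as the paper's invariance of $P^\sigma$ under the two reflections; the evaluation at $k=0$ plus the identity $6(\sigma_0+\sigma_1)=\sigma_0\sigma_1(11-\sigma_0\sigma_1)$ is exactly the paper's check that $P^\sigma(1,1)=0$. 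The most substantive divergence is your treatment of the two families with a free final parameter~$x$: the paper simply computes $\PolyHR{6}$ and $\PolyHR{10}$ in full and observes that $x$ factors out (for \ref{itm:2_sqrt2} it also factors the remaining quartic into two quadratics and kills one of them), whereas you \emph{explain} the freedom in~$x$ by peeling off $h_\tau^x$ and invoking the palindrome identity $JNJ=\transpose{N}$, which gives $c_{11}(N)=c_{22}(N)$ structurally and leaves only the scalar equation $c_{12}(N)=0$. This is a real simplification and an insight absent from the paper; I verified your formula $c_{12}(N)=k^2\tau(\tau-2)+2k(1-\tau)+1$ for~\ref{itm:2kp1_k_general} and its roots $\frac{1}{k}$, $\frac{2k+1}{k}$. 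The one place where your write-up is only a plan rather than a proof is family~\ref{itm:2_sqrt2}: you correctly reduce to the vanishing of a single quartic in~$\tau$ (which is precisely the paper's $N\tau^4$-polynomial divided by~$x$) and identify the right Pell identities, but you do not carry out the computation; this is the same ``straightforward if tedious'' step the paper also leaves to the reader, so it is a matter of completeness rather than a gap in the argument.
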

A remark about point~\ref{itm:Markov_like}: in fact, one can show that these are the \emph{only} values of $n$ for which $\tau = \left(\frac{n-1}{n}\right)^2$ has a half-relation of length~$4$ with positive coefficients.
\begin{proof}
To prove these statements, we check that all of the listed values make the polynomial $\PolyHR{}$ vanish.
\begin{itemize}
\item For points \ref{itm:2kp1_2k} and~\ref{itm:2kp1_k}, this is a straightforward computation.
\item For point~\ref{itm:golden}, we start by computing
\begin{equation}
\PolyHR{5}(1, -1, 1, -1, N; \tau) = N \tau^2 - (3 N + 2) \tau + (N + 4).
\end{equation}
It remains to verify that for all $k \neq 0$, the values $\tau = \frac{F_{k+2}}{F_{k}}$ and $N = 2(-1)^kF_{k-1}F_{k}$ make this expression vanish. This is straightforward (if tedious), for example by plugging in the closed formula
\[F_k = \frac{\phi^k - (-\phi)^{-k}}{\phi + \phi^{-1}}\]
(where $\phi := \frac{1+\sqrt{5}}{2}$ is the golden ratio) and expanding.
\item For point~\ref{itm:2_sqrt2}, we start by computing
\begin{align}
&\PolyHR{10}(N, -1, 1, -1, 1, -1, 1, -1, N, x; \tau) = \nonumber \\
&= x \Big( N^2 \tau^4 - (6 N^2 + 2 N) \tau^3 + (10 N^2 + 10 N + 1) \tau^2 - (4 N^2 + 12 N + 4) \tau + 2 N + 3 \Big) \nonumber \\
&= x \Big( N \tau^2 - (2 N + 1) \tau + 1 \Big) \Big( N \tau^2 - (4 N + 1) \tau + (2 N + 3) \Big).
\end{align}
It turns out that for all $k \neq 0$, the values $\tau = \frac{H_{k+1}}{P_k}$ and $N = (-1)^kP_{k-1}P_k$ make the last factor vanish (so that the whole expression vanishes regardless of the value of~$x$). This is straightforward (if tedious) to verify, for example by plugging in the closed formulas
\begin{equation}
P_k = \frac{\alpha^k - (-\alpha)^{-k}}{\alpha + \alpha^{-1}},\qquad H_k = \frac{\alpha^k + (-\alpha)^{-k}}{\alpha - \alpha^{-1}}
\end{equation}
(where $\alpha = 1 + \sqrt{2}$) and expanding.
\item Finally, for point~\ref{itm:Markov_like}, we start by computing
\begin{align}
\label{eq:PHR_for_u_seq}
&\PolyHR{4}\left(1, \frac{6}{\sigma_{k+1}} (u^\sigma_k)^2, \frac{6}{\sigma_k} (u^\sigma_{k+1})^2, 1; \left(1 - \frac{1}{\frac{6}{\sigma_0 \sigma_1} u^\sigma_k u^\sigma_{k+1}}\right)^2\right) = \nonumber \\
&\qquad\qquad\qquad\qquad\qquad\qquad\qquad\qquad= P^{(\sigma_k, \sigma_{k+1})}(u^\sigma_k, u^\sigma_{k+1}) \nonumber \\
&\qquad\qquad\qquad\qquad\qquad\qquad\qquad\qquad= \begin{cases}
P^\sigma (u^\sigma_k, u^\sigma_{k+1}) &\text{ if $k$ is even} \\
P^\sigma (u^\sigma_{k+1}, u^\sigma_k) &\text{ if $k$ is odd,}
\end{cases}\
\end{align}
where, for all $\sigma = (\sigma_0, \sigma_1)$, $P^\sigma$ is the polynomial given by
\begin{equation}
P^\sigma(x, y) := 1 + \sigma_0 \sigma_1 + \frac{6}{\sigma_1}x^2 + \frac{6}{\sigma_0} y^2 - 12 x y.
\end{equation}
Recalling the definition~\eqref{eq:u_sequence_def} of the sequences~$u^\sigma$, to prove (by induction on~$k$) that the right-hand side of \eqref{eq:PHR_for_u_seq} vanishes for all $k \in \ZZ$, it suffices to check the following two identities. On the one hand, we have
\begin{equation}
\forall \sigma_0 \neq \sigma_1 \in \{1, 2, 3\},\quad P^\sigma(1, 1) = 0
\end{equation}
(indeed the triplet $(\sigma_0 \sigma_1, \frac{6}{\sigma_0}, \frac{6}{\sigma_1})$ is always some permutation of $(2, 3, 6)$). On the other hand, we easily check that
\begin{equation}
\forall \sigma, \forall x, y,\quad P^\sigma(x, y) = P^\sigma(2\sigma_1 y - x, y) = P^\sigma(x, 2\sigma_0 x - y). \qedhere
\end{equation}
\end{itemize}
\end{proof}

\begin{proof}[Proof of Theorem~\ref{infinite_families_description}]
Theorem~\ref{infinite_families_description} now easily follows by Proposition~\ref{half_rels_vs_rels}: indeed, one readily checks that the coefficients of almost all the half-relations listed above are nonzero, and have the required signs. There are only three exceptions:
\begin{itemize}
\item $k = 1$ in cases~\ref{itm:golden} and~\ref{itm:2_sqrt2}, which yields respectively $\tau = 2$ and $\tau = 3$, which are well-known to be non-free. Also in these cases, one can still write down the relation \eqref{eq:symmetric_relation} (or \eqref{eq:one_sided_relation}); some cancellations occur, but one still recovers the nontrivial relation $g^2 h_\tau^{-1} g h_\tau^{-2} g h_\tau^{-1} = \Id$ (for $\tau = 2$) and $g h_\tau^{-1} g h_\tau^{-1} g h_\tau^{-1} = \Id$ (for $\tau = 3$).
\item $k = -1$ in case~\ref{itm:2kp1_2k}, so that $\tau = \frac{9}{4}$. Then the half-relation written above leads to a trivial relation, so we need to find something else. To wit, $(1, -1, 1, 14, 2)$ is a half-relation with nonzero coefficients for $\tau = \frac{9}{4}$. \qedhere
\end{itemize}
\end{proof}

\section*{Acknowledgements}

I would like to thank Gregory Margulis for drawing my attention to this problem, and Paul Mercat for some helpful discussions.

\bibliographystyle{alpha}
\bibliography{/home/ilia/Documents/Travaux_mathematiques/mybibliography.bib}
\end{document}